\pgfplotsset{compat=1.14}
\newcolumntype{K}[1]{>{\centering\arraybackslash$}p{#1}<{$}}
\newcolumntype{R}{>{\raggedleft\arraybackslash}X}
\newcolumntype{L}{>{\raggedright\arraybackslash}X}
\newcolumntype{C}{>{\centering\arraybackslash}X}
\newcolumntype{A}{>{\columncolor{gray!25}}C}
\newcolumntype{a}{>{\columncolor{gray!25}}c}
\newlength{\tablen}
\newcolumntype{.}{D{.}{.}{-1}}
\renewcommand\p@subfigure{\arabic{figure}.}
\renewcommand\p@subtable{\arabic{table}.}
\setlist[itemize]{leftmargin=2.5\parindent}
\setlist[enumerate]{leftmargin=2.5\parindent}
\theoremstyle{plain}
\newtheorem{corollary}{Corollary}[section]
\newtheorem{lemma}{Lemma}[section]
\newtheorem{proposition}{Proposition}[section]
\newtheorem{theorem}{Theorem}[section]
\theoremstyle{definition}
\newtheorem{axiom}{Axiom}[section]
\newtheorem{definition}{Definition}[section]
\newtheorem{example}{Example}[section]
\theoremstyle{remark}
\newtheorem{remark}{Remark}[section]
\def\keywords{\vspace{.5em} 
{\noindent \textit{Keywords}:\,}}
\def\JEL{\vspace{.5em} 
{\noindent \textbf{\emph{JEL} classification number}:\,}}
\def\AMS{\vspace{.5em} 
{\noindent \textbf{\emph{MSC} class}:\,}}
\author{\href{https://sites.google.com/site/laszlocsato87/}{L\'aszl\'o Csat\'o}\thanks{~e-mail: laszlo.csato@uni-corvinus.hu} }
\affil{Institute for Computer Science and Control, Hungarian Academy of Sciences (MTA SZTAKI) \\
Laboratory on Engineering and Management Intelligence, Research Group of Operations Research and Decision Systems}
\affil{Corvinus University of Budapest (BCE) \\
Department of Operations Research and Actuarial Sciences}
\affil{Budapest, Hungary}
\title{Characterization of the row geometric mean ranking with a group consensus axiom}
\date{\today}
\begin{document}

\maketitle

\begin{abstract}
An axiomatic approach is applied to the problem of extracting a ranking of the alternatives from a pairwise comparison ratio matrix.
The ordering induced by row geometric mean method is proved to be uniquely determined by three independent axioms, anonymity (independence of the labelling of alternatives), responsiveness (a kind of monotonicity property) and aggregation invariance, which requires the preservation of group consensus, that is, the pairwise ranking between two alternatives should remain unchanged if unanimous individual preferences are combined by geometric mean.

\keywords{Decision analysis; pairwise comparisons; ranking; geometric mean; axiomatic approach; characterization}

\JEL{C44, D71}

\AMS{90B50, 91B08}
\end{abstract}

\section{Introduction} \label{Sec1}


Preferences of decision makers are often represented by pairwise comparisons when numerical answers to questions like 'How many times alternative $i$ is better than alternative $j$?' are collected into a positive reciprocal matrix \citep{Saaty1980}.
The basic issue in this field is to derive weights from a given set of comparisons, which can be used for measuring the importance of certain decision options, or for determining a \emph{ranking} of the alternatives.
Naturally, the assignment of weights is not necessarily based on the pairwise comparisons paradigm (see, e.g., \citet{JanickiSoudkhah2014}), even though we will use this approach throughout the paper.

Since one can choose among a plethora of weighting methods, an axiomatic approach is worth to consider for this purpose. Introduction and justification of reasonable properties may reveal the advantages and disadvantages of certain procedures, and the axioms may even \emph{characterize}, uniquely determine the weights.

Probably the first work on this topic, \citet{Fichtner1984} characterized the \emph{row geometric mean} -- sometimes called logarithmic least squares -- \emph{method} \citep{Rabinowitz1976, CrawfordWilliams1980, CrawfordWilliams1985, DeGraan1980} by using four axioms, correctness in the consistent case, comparison order invariance, smoothness and power invariance.
Furthermore, the \emph{eigenvector method} \citep{Saaty1980} is uniquely determined by correctness in the consistent case, comparison order invariance, smoothness and rank preservation \citep{Fichtner1986}, that is, it can be obtained with changing only one property in the previous result.

From this set of axioms, correctness in the consistent case and comparison order invariance are almost impossible to debate.
Nevertheless, there exists a goal-programming method satisfying power invariance and a slightly modified version of smoothness besides these two basic axioms, which possesses the additional property that the presence of a single outlier cannot prevent the identification of the correct priority vector \citep{Bryson1995}.
\citet{CookKress1988} approached the problem by focusing on distance measures in order to get another goal programming method on an axiomatic basis.

Smoothness and power invariance can be entirely left out from the characterization of the row geometric mean method. \citet{BarzilaiCookGolany1987} substitute them with a consistency-like axiom by introducing two procedures which are required to result in the same preference vector: (1) some pairwise comparison matrices are aggregated to one matrix and the solution is computed for this matrix, (2) the priorities are derived separately for each matrix and combined by the geometric mean. We think it is not a simple condition immediately to adopt.
\citet{Barzilai1997} replaced this axiom and comparison order invariance with essentially demanding that each individual weight is a function of the entries in the corresponding row of the pairwise comparison matrix only. Joining to \citet{Dijkstra2013}, we are also somewhat uncomfortable with this premise.
\citet{Csato2018c} characterizes row geometric mean by assuming the weight vector to be independent of an arbitrary multiplication of matrix elements along a 3-cycle by a positive scalar.

To conclude, the problem of weight derivation seems to be not finally settled by previous axiomatizations.
Therefore we want to provide a characterization of the row geometric mean ranking from the perspective of group decision making.

Focusing on the ranking is a departure from the existing literature, which requires some explanation.
First, similarly to the case of inconsistency indices \citep{Csato2018e, Csato2018a}, our setting probably makes the axioms more easy to motivate and the result to understand.
Second, weighting methods are often used only to determine a ranking of the alternatives \citep{SaatyHu1998}.
Third, the main result essentially depends on an axiom called \emph{aggregation invariance} \citep{Csato2017b}, that is, the pairwise ranking between two alternatives should remain unchanged if unanimous individual preferences are combined by geometric mean. According to our knowledge, this property does not have an equivalent form for ratings, while similar conditions have been extensively used in social choice theory \citep{Young1974, NitzanRubinstein1981, ChebotarevShamis1998a, vandenBrinkGilles2009, Gonzalez-DiazHendrickxLohmann2013, Csato2018f, Csato2018g}.
Furthermore, since the exact meaning of aggregation invariance is determined by the aggregation procedure of pairwise comparison matrices, our axiomatization practically follows from the central work of \citet{AczelSaaty1983} on synthesizing ratio judgements.

Other axioms used in the characterization are relatively straightforward: \emph{anonymity} is probably the most natural independence property, while \emph{responsiveness} is a standard monotonicity condition directly implied by the representation of decision makers' preferences.

Presenting an axiomatic characterization does not mean that we accept all properties involved as wholly justified and unquestionable. However, \emph{if} one agrees with our axioms, then geometric mean remains the only choice. Consequently, using any other method requires explaining the violation of at least one axiom.

The paper has the following structure.
Section~\ref{Sec2} defines pairwise comparison matrices, weighting and ranking methods.
Section~\ref{Sec3} recalls the axioms introduced in \citet{Csato2017b}, and presents three new properties.
Some connections among these requirements are revealed in Section~\ref{Sec4}.
Section~\ref{Sec5} analyses the rankings induced by the eigenvector and row geometric mean methods with respect to the axioms.
Section~\ref{Sec6} provides the main result, a characterization of the row geometric mean ranking.
Finally, our contributions are summarized in Section~\ref{Sec7}.

\section{Preliminaries} \label{Sec2}

Let $\mathbb{R}^{n}_+$ and $\mathbb{R}^{n \times n}_+$ be the set of positive (with all elements greater than zero) vectors of size $n$ and matrices of size $n \times n$, respectively.
Let $N = \{ 1,2, \dots ,n \}$ be the set of alternatives.

\begin{definition} \label{Def21}
\emph{Pairwise comparison matrix}:
Matrix $\mathbf{A} = \left[ a_{ij} \right] \in \mathbb{R}^{n \times n}_+$ is a \emph{pairwise comparison matrix} if $a_{ji} = 1/a_{ij}$ for all $1 \leq i,j \leq n$.
\end{definition}

The set of pairwise comparison matrices of size $n \times n$ is denoted by $\mathcal{A}^{n \times n}$.

Let $\mathbf{1} \in \mathcal{A}^{n \times n}$ be the pairwise comparison matrix with all of its elements equal to $1$.

A pairwise comparison matrix $\mathbf{A} \in \mathcal{A}^{n \times n}$ is said to be \emph{consistent} if $a_{ik} = a_{ij} a_{jk}$ for all $1 \leq i,j,k \leq n$.
Otherwise, it is \emph{inconsistent}.

\begin{definition} \label{Def22}
\emph{Weight vector}:
Vector $\mathbf{w}  = \left[ w_{i} \right] \in \mathbb{R}^n_+$ is a \emph{weight vector} if $\sum_{i=1}^n w_{i} = 1$.
\end{definition}

The set of weight vectors of size $n$ is denoted by $\mathcal{R}^{n}$.

\begin{definition} \label{Def23}
\emph{Weighting method}:
Mapping $f: \mathcal{A}^{n \times n} \to \mathcal{R}^{n}$ is a \emph{weighting method}.
\end{definition}

A weighting method assigns a weight vector to every pairwise comparison matrix.

Several weighting methods have been suggested in the literature, see \citet{ChooWedley2004} for an overview. 
We discuss only two of them, which are probably the most popular.
The eigenvector method has also been addressed in the paper that introduced the crucial axiom of our characterization \citep{Csato2017b}.
The row geometric mean method is in the focus of the discussion.

\begin{definition} \label{Def24}
\emph{Eigenvector method} ($EM$) \citep{Saaty1980}:
The \emph{eigenvector method} is the mapping $\mathbf{A} \to \mathbf{w}^{EM} (\mathbf{A})$ such that
\[
\mathbf{A} \mathbf{w}^{EM}(\mathbf{A}) = \lambda_{\max} \mathbf{w}^{EM}(\mathbf{A}),
\]
where $\lambda_{\max}$ denotes the maximal eigenvalue, also known as principal or Perron eigenvalue, of (positive) matrix $\mathbf{A}$.
\end{definition}


\begin{definition} \label{Def25}
\emph{Row geometric mean method} ($RGM$) \citep{Rabinowitz1976, CrawfordWilliams1980, CrawfordWilliams1985, DeGraan1980}: 
The \emph{row geometric mean method} is the mapping $\mathbf{A} \to \mathbf{w}^{RGM} (\mathbf{A})$ such that the weight vector $\mathbf{w}^{RGM} (\mathbf{A})$ is the unique solution of the following optimization problem:
\begin{equation} \label{Eq_LLSM}
\min_{\mathbf{w} \in \mathcal{R}^n} \sum_{i=1}^n \sum_{j=1}^n \left[ \log a_{ij} - \log \left( \frac{w_i}{w_j} \right) \right]^2.
\end{equation}
\end{definition}

$RGM$ seeks for a weight vector that generates the consistent pairwise comparison matrix which is the closest to the pairwise comparison matrix $\mathbf{A}$ if the metric of logarithmic least squares is applied. Therefore it is often called the \emph{logarithmic least squares method}.

The name row geometric mean originates from the formula of the solution to \eqref{Eq_LLSM}, which is
\[
w_i^{RGM}(\mathbf{A}) = \frac{\prod_{j=1}^n a_{ij}^{1/n}}{\sum_{k=1}^n \prod_{j=1}^n a_{kj}^{1/n}}.
\]

Weighting methods are often used to derive a \emph{ranking} of the alternatives.
Ranking $\succeq$ is a weak order on the set of alternatives $N$, so it is complete (for all $i,j \in N$: $i \succeq j$ or $i \preceq j$) and transitive (for all $i,j,k \in N$: $i \succeq j$ and $j \succeq k$ implies $i \succeq k$).

The asymmetric and symmetric parts of a ranking $\succeq$ will be written as $\succ$ and $\sim$, respectively: $i \succ j$ if and only if $i \succeq j$ and not $j \succeq i$, $i \sim j$ if and only if $i \succeq j$ and $j \succeq i$.

The set of possible rankings on $n$ alternatives is denoted by $\mathfrak{R}^{n}$.

\begin{definition} \label{Def26}
\emph{Ranking method}:
Mapping $g: \mathcal{A}^{n \times n} \to \mathfrak{R}^{n}$ is a \emph{ranking method}.
\end{definition}

A ranking method assigns a ranking of the alternatives to every pairwise comparison matrix. We use the convention that $\succeq_{\mathbf{A}}^g$ is the ranking assigned by ranking method $g$ for pairwise comparison matrix $\mathbf{A} \in \mathcal{A}^{n \times n}$.

All weighting methods induce a ranking method, for instance:
\begin{itemize}
\item
the \emph{eigenvector ranking method} is denoted by $\succeq^{EM}$, where $i \succeq^{EM}_\mathbf{A} j$ if and only if $w_i^{EM}(\mathbf{A}) \geq w_j^{EM}(\mathbf{A})$;
\item
the \emph{row geometric mean ranking method} is denoted by $\succeq^{RGM}$, where $i \succeq^{RGM}_\mathbf{A} j$ if and only if $w_i^{RGM}(\mathbf{A}) \geq w_j^{RGM}(\mathbf{A})$.
\end{itemize}

\section{Axioms} \label{Sec3}

The six properties discussed here concern ranking methods, that is, they only deal with the relative importance of alternatives.
Some earlier works have used similar axioms for rankings.
\citet{SaatyVargas1984a} introduce the properties strong and weak rank preservation.
\citet{GenestLapointeDrury1993} examine the effect of a coding parameter for ordinal preferences on the ordering of alternatives from $EM$.
\citet{CsatoRonyai2016} discuss a condition on the ranking of alternatives derived from an incomplete pairwise comparison matrix.
\citet{PerezMokotoff2016} show an example of strong rank reversal in group decision making by $EM$.

First, let us briefly recall three axioms from \citet{Csato2017b}.

\begin{axiom} \label{Axiom1}
\emph{Anonymity} ($ANO$):
Let $\mathbf{A} = \left[ a_{ij} \right] \in \mathcal{A}^{n \times n}$ be a pairwise comparison matrix, $\sigma: N \rightarrow N$ be a permutation on the set of alternatives, and $\sigma(\mathbf{A}) = \left[ \sigma(a)_{ij} \right] \in \mathcal{A}^{n \times n}$ be the pairwise comparison matrix obtained from $\mathbf{A}$ by this permutation such that $\sigma(a)_{ij} = a_{\sigma(i) \sigma(j)}$.
Ranking method $g: \mathcal{A}^{n \times n} \to \mathfrak{R}^n$ is \emph{anonymous} if $i \succeq^g_\mathbf{A} j \iff \sigma(i) \succeq^g_{\sigma(\mathbf{A})} \sigma(j)$ for all $1 \leq i,j \leq n$.
\end{axiom}

$ANO$ demands the ranking of alternatives to be independent of their labels, which is important because the 'names' of the alternatives can be arbitrary.
This property was used under the name \emph{comparison order invariance} by \citet{Fichtner1984} for weighting methods.

\begin{definition} \label{Def31}
\emph{Aggregation of pairwise comparison matrices}:
Let $\mathbf{A}^{(1)} = \left[ a_{ij}^{(1)} \right] \in \mathcal{A}^{n \times n}$, $\mathbf{A}^{(2)} = \left[ a_{ij}^{(2)} \right] \in \mathcal{A}^{n \times n}$, $\dots$, $\mathbf{A}^{(k)} = \left[ a_{ij}^{(k)} \right] \in \mathcal{A}^{n \times n}$ be any pairwise comparison matrices. Their \emph{aggregate} is the pairwise comparison matrix $\mathbf{A}^{(1)} \oplus \mathbf{A}^{(2)} \oplus \dots \oplus \mathbf{A}^{(k)} = \left[ \sqrt[k]{a_{ij}^{(1)} a_{ij}^{(2)} \cdots a_{ij}^{(k)}} \right] \in \mathcal{A}^{n \times n}$.
\end{definition}

Aggregation is equivalent to taking the geometric mean of all corresponding matrix elements.
\citet{AczelSaaty1983} show geometric mean to be the only reasonable aggregation procedure, the unique quasiarithmetic mean satisfying reciprocity and positive homogeneity. According to reciprocity, the aggregated matrix is a pairwise comparison matrix, too, while positive homogeneity means that multiplying all individual preferences by the same positive scalar leads to an appropriate change in the aggregated preferences. 


\begin{axiom} \label{Axiom2}
\emph{Aggregation invariance} ($AI$):
Let $\mathbf{A}^{(1)},\mathbf{A}^{(2)}, \dots , \mathbf{A}^{(k)} \in \mathcal{A}^{n \times n}$ be any pairwise comparison matrices. Let $g: \mathcal{A}^{n \times n} \to \mathfrak{R}^n$ be a ranking method such that $i \succeq^g_{\mathbf{A}^{(\ell)}} j$ for all $1 \leq \ell \leq k$.
$g$ is called \emph{aggregation invariant} if $i \succeq^g_{\mathbf{A}^{(1)} \oplus \mathbf{A}^{(2)} \oplus \dots \oplus \mathbf{A}^{(k)}} j$, furthermore, $i \succ^g_{\mathbf{A}^{(1)} \oplus \mathbf{A}^{(2)} \oplus \dots \oplus \mathbf{A}^{(k)}} j$ if $i \succ^g_{\mathbf{A}^{(\ell)}} j$ for at least  one $1 \leq \ell \leq k$.
\end{axiom}

$AI$ is an intuitive condition of group decision making: if individuals unanimously agree that alternative $i$ is not worse than $j$, this relation should be preserved when their opinions are combined, i.e., it should also be reflected by the collective preferences.

Note that aggregation invariance does not allow for different weights of decision makers. However, if the weights are rational numbers, then $AI$ is equivalent to this more general requirement.

\cite{PerezMokotoff2016} introduced a weaker property called \emph{group-coherence for choice} where alternative $i$ should have the highest priority in each pairwise comparison
matrices.

\begin{definition} \label{Def32}
\emph{Opposite of a pairwise comparison matrix}:
Let $\mathbf{A} = \left[ a_{ij} \right] \in \mathcal{A}^{n \times n}$ be a pairwise comparison matrix. Its \emph{opposite} is the pairwise comparison matrix $\mathbf{A}^- \in \mathcal{A}^{n \times n}$ such that $a_{ij}^- = 1 / a_{ij} = a_{ji}$ for all $1 \leq i,j \leq n$.
\end{definition}

Taking the opposite is equivalent to reversing all preferences of the decision-maker, and transposing the original pairwise comparison matrix.

\begin{axiom} \label{Axiom3}
\emph{Inversion} ($INV$):
Let $\mathbf{A} \in \mathcal{A}^{n \times n}$ be a pairwise comparison matrix.
Ranking method $g: \mathcal{A}^{n \times n} \to \mathfrak{R}^n$ is \emph{invertible} if $i \succeq^g_\mathbf{A} j \iff i \preceq^g_{\mathbf{A}^-} j$ for all $1 \leq i,j \leq n$.
\end{axiom}

Inversion implies that a reversal of all preferences changes the ranking accordingly.
An equivalent version of $INV$ for weighting methods has been implicitly investigated in \citet{JohnsonBeineWang1979}, and introduced under the name \emph{scale inversion} in \citet{Barzilai1997}.
An analogous axiom is \emph{invariance under inversion of preferences} for inconsistency indices \citep{Brunelli2017}, which requires the inconsistency of an arbitrary pairwise comparison matrix and its opposite to be the same.

The three properties below are probably first presented here.

\begin{axiom} \label{Axiom4}
\emph{Rational scale invariance} ($RSI$):
Let $\mathbf{A},\mathbf{A}^{(\kappa)} \in \mathcal{A}^{n \times n}$ be two pairwise comparison matrices such that $a^{(\kappa)}_{ij} = a_{ij}^\kappa$ for all $1 \leq i,j \leq n$ and $\kappa \in \mathbb{Q}_+$ is a positive rational number.
Ranking method $g: \mathcal{A}^{n \times n} \to \mathfrak{R}^n$ is called \emph{rational scale invariant} if $i \succeq^g_{\mathbf{A}} j \iff i \succeq^g_{\mathbf{A}^{(\kappa)}} j$.
\end{axiom}

Rational scale invariance is an adaptation of power invariance \citep{Fichtner1984} for ranking methods: the ordering of the alternatives does not change if a different scale is used for pairwise comparisons.
For example, when only two verbal expressions, 'weakly preferred' and 'strongly preferred' are allowed, the ranking is required to be the same if these preferences are represented by values $2$ and $3$, or $4$ and $9$, respectively.
This property has been implicitly investigated in \citet{GenestLapointeDrury1993}.

$RSI$ demands the invariance only in the case of a positive rational exponent. Naturally, one can define it for all positive real numbers, but this weaker form will be enough for us.

\begin{axiom} \label{Axiom5}
\emph{Independence of irrelevant comparisons} ($IIC$):
Let $\mathbf{A},\mathbf{A}' \in \mathcal{A}^{n \times n}$ be two pairwise comparison matrices and $1 \leq i,j,k,\ell \leq n$ be four different alternatives such that $\mathbf{A}$ and $\mathbf{A}'$ are identical but $a'_{k \ell} \neq a_{k \ell}$ ($a_{\ell k}' \neq a_{\ell k}$).
Ranking method $g: \mathcal{A}^{n \times n} \to \mathfrak{R}^n$ is called \emph{independent of irrelevant comparisons} if $i \succeq^g_{\mathbf{A}} j \iff i \succeq^g_{\mathbf{A}'} j$.
\end{axiom}

$IIC$ implies that 'remote' comparisons -- not involving alternatives $i$ and $j$ -- do not affect the pairwise ranking of $i$ and $j$. It has a meaning if $n \geq 4$.
Analogous axioms are extensively used in social choice theory, for example, in Arrow's impossibility theorem \citep{Arrow1950}.

Sequential application of independence of irrelevant comparisons may lead to any pairwise comparison matrix $\bar{\mathbf{A}} \in \mathcal{A}^{n \times n}$, for which $\bar{a}_{gh} = a_{gh}$ if $\{ g,h \} \cap \{ i, j \} \neq \emptyset$, but all other elements are arbitrary.

\begin{axiom} \label{Axiom6}
\emph{Responsiveness} ($RES$):
Let $\mathbf{A},\mathbf{A}' \in \mathcal{A}^{n \times n}$ be two pairwise comparison matrices and $1 \leq i,j \leq n$ be two different alternatives such that $\mathbf{A}$ and $\mathbf{A}'$ are identical but $a'_{ij} > a_{ij}$ ($a_{ji}' < a_{ji}$). 
Ranking method $g: \mathcal{A}^{n \times n} \to \mathfrak{R}^n$ is called \emph{responsive} if $i \succeq^g_{\mathbf{A}} j \Rightarrow i \succ^g_{\mathbf{A}'} j$.
\end{axiom}

Responsiveness is a natural monotonicity condition, similar to \emph{positive responsiveness} \citep{vandenBrinkGilles2009} and \emph{positive responsiveness to the beating relation} \citep{Gonzalez-DiazHendrickxLohmann2013}: if alternative $i$ is ranked at least as high as alternative $j$, then it should be ranked strictly higher when their comparison $a_{ij}$ changes in favour of alternative $i$.
An analogous axiom \emph{monotonicity on single comparisons} is used for inconsistency indices by \citet{BrunelliFedrizzi2015}, where the authors provide a further discussion of its origin.

To conclude, all of our six axioms have a parallel version in different topics such as social choice theory or measurement of inconsistency.

\section{Implications among the axioms} \label{Sec4}

In this section, some implications among properties presented in Section~\ref{Sec3} will be revealed.

\begin{lemma} \label{Lemma41}
$ANO$ and $AI$ imply $INV$.
\end{lemma}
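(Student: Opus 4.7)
The plan is to exploit the fact that aggregating any pairwise comparison matrix $\mathbf{A}$ with its opposite $\mathbf{A}^-$ yields the identity-of-preferences matrix $\mathbf{1}$: by Definition~\ref{Def31}, the $(i,j)$ entry of $\mathbf{A} \oplus \mathbf{A}^-$ equals $\sqrt{a_{ij} \cdot (1/a_{ij})} = 1$, so $\mathbf{A} \oplus \mathbf{A}^- = \mathbf{1}$. This single observation will convert the inversion statement into a constraint on the ranking that $g$ assigns to $\mathbf{1}$.

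I would first establish the auxiliary fact that $ANO$ forces $i \sim^g_{\mathbf{1}} j$ for every pair $i,j$. The reason is that $\mathbf{1}$ is fixed by every permutation of $N$, so for the transposition $\sigma$ swapping $i$ and $j$, anonymity gives $i \succeq^g_{\mathbf{1}} j \iff j \succeq^g_{\mathbf{1}} i$. Combined with completeness of a ranking, this yields indifference between any two alternatives under $\mathbf{1}$.

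The main step then argues by contradiction. Assume $i \succeq^g_{\mathbf{A}} j$ but not $i \preceq^g_{\mathbf{A}^-} j$; by completeness this means $i \succ^g_{\mathbf{A}^-} j$, and in particular also $i \succeq^g_{\mathbf{A}^-} j$. Then the hypothesis of $AI$ is satisfied for the two matrices $\mathbf{A}$ and $\mathbf{A}^-$, and since the strict preference holds for $\mathbf{A}^-$, aggregation invariance forces $i \succ^g_{\mathbf{A} \oplus \mathbf{A}^-} j$, that is, $i \succ^g_{\mathbf{1}} j$. This contradicts the indifference established in the previous paragraph, giving the implication $i \succeq^g_{\mathbf{A}} j \Rightarrow i \preceq^g_{\mathbf{A}^-} j$.

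For the converse direction I would simply invoke symmetry: since $(\mathbf{A}^-)^- = \mathbf{A}$, the argument just carried out, applied to the matrix $\mathbf{A}^-$ in the role of $\mathbf{A}$, yields $i \succeq^g_{\mathbf{A}^-} j \Rightarrow i \preceq^g_{\mathbf{A}} j$, which is exactly the reverse implication needed for $INV$. There is no real obstacle here; the only point requiring a touch of care is the completeness-plus-contradiction bookkeeping, making sure that the strict inequality provided by $AI$ is indeed the one that clashes with $i \sim^g_{\mathbf{1}} j$.
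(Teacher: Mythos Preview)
Your proof is correct and follows essentially the same route as the paper's: both hinge on the identity $\mathbf{A} \oplus \mathbf{A}^- = \mathbf{1}$, use $ANO$ to obtain $i \sim^g_{\mathbf{1}} j$, and then derive a contradiction from $AI$ under the assumption that $INV$ fails. Your write-up is somewhat more detailed (you spell out why anonymity forces indifference on $\mathbf{1}$ and treat the converse implication explicitly via $(\mathbf{A}^-)^- = \mathbf{A}$), but the argument is the same.
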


\begin{proof}
Let $g: \mathcal{A}^{n \times n} \to \mathfrak{R}^n$ be a ranking method satisfying $ANO$ and $AI$.
Assume to the contrary that there exist alternatives $i$ and $j$ with a pairwise comparison matrix $\mathbf{A}$ such that $i \succeq^g_{\mathbf{A}} j$ and $i \succ^g_{\mathbf{A}^-} j$.
Consider the aggregated pairwise comparison matrix $\mathbf{A} + \mathbf{A}^- = \mathbf{1}$. Anonymity implies $i \sim^g_{\mathbf{1}} j$, while aggregation invariance leads to $i \succ^g_{\mathbf{1}} j$, a contradiction.
See also \citet[Lemma~4.1]{Csato2017b}.
\end{proof}

\begin{remark} \label{Rem41}
$ANO$ and $INV$ do not imply $AI$.
\end{remark}

Remark~\ref{Rem41} is verified by a counterexample.

\begin{example} \label{Examp41}
Consider the ranking method based on arithmetic means: $g: \mathcal{A}^{n \times n} \to \mathfrak{R}^n$ such that $i \succeq^g_{\mathbf{A}} j$ if $\sum_{k=1}^n a_{ik} \geq \sum_{k=1}^n a_{jk}$.
It is anonymous and invertible, but not aggregation invariant as the following matrices show:
\[
\mathbf{A}^{(1)} = \left[
\begin{array}{K{1.5em} K{1.5em} K{1.5em}}
    1     & 4     & 4     \\
     1/4  & 1     & 9     \\
     1/4  &  1/9  & 1     \\
\end{array}
\right] \quad \text{and} \quad
\mathbf{A}^{(2)} = \left[
\begin{array}{K{1.5em} K{1.5em} K{1.5em}}
    1     &  1/4  & 4     \\
    4     & 1     & 1     \\
     1/4  &  1    & 1     \\
\end{array}
\right] \text{, therefore}
\]
\[
\mathbf{B} = \mathbf{A}^{(1)} \oplus \mathbf{A}^{(2)} = \left[
\begin{array}{K{1.5em} K{1.5em} K{1.5em}}
    1     & 1     & 4     \\
    1     & 1     & 3     \\
     1/4  &  1/3  & 1     \\
\end{array}
\right].
\]
Here $1 \prec^g_{\mathbf{A}^{(1)}} 2$ because of $\sum_{k=1}^n a_{1k}^{(1)} = 9 < 10.25 = \sum_{k=1}^n a_{2k}^{(1)}$ and $1 \prec^g_{\mathbf{A}^{(2)}} 2$ as $\sum_{k=1}^n a_{1k}^{(2)} = 5.25 < 6 = \sum_{k=1}^n a_{2k}^{(1)}$, but $1 \succ^g_{\mathbf{B}} 2$ since $\sum_{k=1}^n b_{1k} = 6 > 5 = \sum_{k=1}^n b_{2k}$.
\end{example}

\begin{remark} \label{Rem42}
$AI$ and $INV$ do not imply $ANO$.
\end{remark}

Remark~\ref{Rem42} is verified by a counterexample.

\begin{example} \label{Examp42}
Consider the ranking method based on the first column: $g: \mathcal{A}^{n \times n} \to \mathfrak{R}^n$ such that $i \succeq^g_{\mathbf{A}} j$ if $a_{i1} \geq a_{j1}$.
It is aggregation invariant and invertible, but not anonymous.
\end{example}

\begin{lemma} \label{Lemma42}
$ANO$ and $AI$ imply $RSI$.
\end{lemma}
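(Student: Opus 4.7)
The plan is to exploit the fact that the all-ones matrix $\mathbf{1}$ is ranking-neutral, and then to realize any positive rational power of $\mathbf{A}$ as an aggregate of copies of $\mathbf{A}$ and $\mathbf{1}$. First I would apply $ANO$ with the transposition $\sigma = (ij)$, which fixes $\mathbf{1}$; this forces $i \succeq^g_{\mathbf{1}} j \iff j \succeq^g_{\mathbf{1}} i$, hence $i \sim^g_{\mathbf{1}} j$ for every pair $i,j$.

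Next, for $\kappa = p/q \in \mathbb{Q}_+$ with $1 \leq p \leq q$, I would form the aggregate of $p$ copies of $\mathbf{A}$ and $q - p$ copies of $\mathbf{1}$. Its $(i,j)$-entry is $\bigl(a_{ij}^p \cdot 1^{q-p}\bigr)^{1/q} = a_{ij}^{\kappa}$, so the aggregate equals $\mathbf{A}^{(\kappa)}$. Because $i \sim^g_{\mathbf{1}} j$ for every pair, $AI$ transmits $i \succeq^g_{\mathbf{A}} j$ to $i \succeq^g_{\mathbf{A}^{(\kappa)}} j$, and the strict clause of $AI$, applied through any of the $p \geq 1$ copies of $\mathbf{A}$, transmits $i \succ^g_{\mathbf{A}} j$ to $i \succ^g_{\mathbf{A}^{(\kappa)}} j$. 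Swapping the roles of $i$ and $j$ and using completeness of the weak order gives the converse implications, so $i \succeq^g_{\mathbf{A}} j \iff i \succeq^g_{\mathbf{A}^{(\kappa)}} j$. For $\kappa > 1$, I would apply the same argument to $\mathbf{A}^{(\kappa)}$ with exponent $1/\kappa < 1$, using that $(\mathbf{A}^{(\kappa)})^{(1/\kappa)} = \mathbf{A}$.

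The only delicate point is the bookkeeping between the weak and strict forms of $AI$: a single application would only yield $i \succeq^g_{\mathbf{A}} j \Rightarrow i \succeq^g_{\mathbf{A}^{(\kappa)}} j$, whereas $RSI$ is stated as a biconditional of weak relations. This is why the trichotomy/contraposition step — exchanging $i$ and $j$ and invoking completeness — is needed to close the argument in both directions.
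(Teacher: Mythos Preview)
Your proposal is correct and follows essentially the same route as the paper: use $ANO$ to establish that $\mathbf{1}$ is ranking-neutral, then realize $\mathbf{A}^{(\kappa)}$ with $\kappa = p/q \leq 1$ as the aggregate of $p$ copies of $\mathbf{A}$ and $q-p$ copies of $\mathbf{1}$, and invoke $AI$. In fact you are more careful than the paper, which writes only the $\Rightarrow$ direction and leaves both the $\kappa > 1$ reduction and the trichotomy/contraposition step implicit; your explicit use of the strict clause of $AI$ and the swap of $i,j$ is exactly what is needed to close the biconditional.
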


\begin{proof}
Consider two pairwise comparison matrices $\mathbf{A},\mathbf{A}^{(\kappa)} \in \mathcal{A}^{n \times n}$ and a ranking method $g: \mathcal{A}^{n \times n} \to \mathfrak{R}^n$ with $i \succeq^g_\mathbf{A} j$. It can be assumed without loss of generality that $\kappa =  k / \ell$ and $0 < k \leq \ell$, $k, \ell \in \mathbb{Z}$.
Then $a_{ij}^\kappa = a_{ij}^{k / \ell}$ is the geometric mean of $k$ pieces of $a_{ij}$ and $\ell - k$ pieces of $1$ for all $1 \leq i,j \leq n$.
In other words, $\mathbf{A}^{(\kappa)} = \mathbf{A} \oplus \dots \oplus \mathbf{A} \oplus \mathbf{1} \oplus \dots \oplus \mathbf{1}$, where the number of $\mathbf{A}$-s is $k$ and the number of $\mathbf{1}$-s is $k - \ell$ in the aggregation. Since $i \succeq^g_{\mathbf{1}} j$ due to anonymity, $i \succeq^g_{\mathbf{A}^{(\kappa)}} j$ is implied by aggregation invariance, thus $g$ is rational scale invariant.
\end{proof}

Note that the proof of Lemma~\ref{Lemma42} does not work directly if the exponent is allowed to be irrational.

\begin{remark} \label{Rem43}
$ANO$ and $RSI$ do not imply $AI$.
\end{remark}

Remark~\ref{Rem43} is verified by a counterexample.

\begin{example} \label{Examp43}
Consider the ranking method based on the product of favourable comparisons: $g: \mathcal{A}^{n \times n} \to \mathfrak{R}^n$ such that $i \succeq^g_{\mathbf{A}} j$ if $\prod_{k=1, a_{ik} \geq 1}^n a_{ik} \geq \prod_{k=1, a_{jk} \geq 1}^n a_{jk}$.
It is anonymous and rational scale invariant, but not aggregation invariant as the following matrices show:
\[
\mathbf{A}^{(1)} = \left[
\begin{array}{K{1.5em} K{1.5em} K{1.5em}}
    1       & 2       &   1/9  \\
      1/2  & 1       & 1       \\
    9       & 1       & 1       \\
\end{array}
\right] \quad \text{and} \quad
\mathbf{A}^{(2)} = \left[
\begin{array}{K{1.5em} K{1.5em} K{1.5em}}
    1       &   1/8  & 9       \\
    8       & 1       & 1       \\
      1/9  & 1       & 1       \\
\end{array}
\right] \text{, therefore}
\]
\[
\mathbf{B} = \mathbf{A}^{(1)} \oplus \mathbf{A}^{(2)} = \left[
\begin{array}{K{1.5em} K{1.5em} K{1.5em}}
    1       &   1/2  & 1       \\
    2       & 1       & 1       \\
    1       & 1       & 1       \\
\end{array}
\right].
\]
Here $1 \succ^g_{\mathbf{A}^{(1)}} 2$ because $2 > 1$ and $1 \succ^g_{\mathbf{A}^{(2)}} 2$ as $9 > 8$, but $1 \prec^g_{\mathbf{B}} 2$ since $1 < 2$.
\end{example}

\begin{remark} \label{Rem44}
$AI$ and $RSI$ do not imply $ANO$.
\end{remark}

Remark~\ref{Rem44} is verified by a counterexample.

\begin{example} \label{Examp44}
Consider the ranking method based on the first column: $g: \mathcal{A}^{n \times n} \to \mathfrak{R}^n$ such that $i \succeq^g_{\mathbf{A}} j$ if $a_{i1} \geq a_{j1}$.
It is aggregation invariant and rational scale invariant, but not anonymous.
\end{example}

\begin{lemma} \label{Lemma43}
$ANO$ and $AI$ imply $IIC$.
\end{lemma}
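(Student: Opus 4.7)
The plan is to construct, from any two matrices $\mathbf{A},\mathbf{A}'$ that agree except at entries $(k,\ell)$ and $(\ell,k)$ (with $k,\ell$ distinct from $i,j$), a single aggregated matrix that actually does not depend on the changed entries at all. Once such a matrix is in hand, $ANO$ and $AI$ will tie the ranking of $i$ and $j$ in $\mathbf{A}$ and in $\mathbf{A}'$ to the ranking in this common aggregate, and thus to each other.

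Concretely, I would let $\sigma$ be the transposition swapping $k$ and $\ell$ (fixing every other index, in particular $i$ and $j$) and form
\[
\mathbf{B} := \mathbf{A} \oplus \sigma(\mathbf{A}), \qquad \mathbf{B}' := \mathbf{A}' \oplus \sigma(\mathbf{A}').
\]
The key observation is that the $(k,\ell)$-entry of $\mathbf{B}$ is $\sqrt{a_{k\ell}\, a_{\sigma(k)\sigma(\ell)}} = \sqrt{a_{k\ell}\, a_{\ell k}} = 1$, independently of $a_{k\ell}$, and similarly at $(\ell,k)$. All remaining entries of $\mathbf{B}$ depend only on entries of $\mathbf{A}$ outside those two positions: $b_{mn}=a_{mn}$ when $\{m,n\}\cap\{k,\ell\}=\emptyset$, and $b_{km}=b_{\ell m}=\sqrt{a_{km}\,a_{\ell m}}$ for $m\notin\{k,\ell\}$ (with the reciprocals in the mirror positions). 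Since $\mathbf{A}$ and $\mathbf{A}'$ differ only at $(k,\ell)$ and $(\ell,k)$, this gives $\mathbf{B}=\mathbf{B}'$.

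With this in place, I would argue by contradiction. Suppose $i\succeq^g_{\mathbf{A}} j$ but $j\succ^g_{\mathbf{A}'} i$. Because $\sigma$ fixes both $i$ and $j$, $ANO$ turns $i\succeq^g_{\mathbf{A}} j$ into $i\succeq^g_{\sigma(\mathbf{A})} j$, and $AI$ then yields $i\succeq^g_{\mathbf{B}} j$. The same two steps applied to $\mathbf{A}'$ give $j\succ^g_{\sigma(\mathbf{A}')} i$ and hence $j\succ^g_{\mathbf{B}'} i$ by the strict part of $AI$. Since $\mathbf{B}=\mathbf{B}'$, this contradicts $i\succeq^g_{\mathbf{B}} j$. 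Exchanging the roles of $\mathbf{A}$ and $\mathbf{A}'$ handles the reverse implication, so $i\succeq^g_{\mathbf{A}} j \iff i\succeq^g_{\mathbf{A}'} j$, which is $IIC$.

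I do not expect any serious obstacle: the whole argument hinges on the identity $\sqrt{a_{k\ell}\, a_{\ell k}}=1$, which is exactly what reciprocity of pairwise comparison matrices and aggregation by geometric mean are designed to deliver. The only thing to be careful about is checking that the symmetrization $\mathbf{A}\oplus\sigma(\mathbf{A})$ really erases \emph{only} the changed entries and not anything else needed for the ranking between $i$ and $j$; this is why it is important that $\sigma$ fixes $i$ and $j$, so that $ANO$ can be invoked without permuting the two alternatives whose ranking is under consideration.
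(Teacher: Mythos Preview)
Your argument is correct and, in fact, cleaner than the paper's. The paper proceeds by swapping $i$ and $j$ (not $k$ and $\ell$), first invoking Lemma~\ref{Lemma41} to obtain $INV$, and then aggregating four matrices $\mathbf{A} \oplus \mathbf{B}^- \oplus \sigma(\mathbf{A})^- \oplus \sigma(\mathbf{B})$ to reach the all-ones matrix $\mathbf{1}$; anonymity forces $i \sim j$ there while $AI$ forces $i \succ j$, giving the contradiction. Your route instead swaps $k$ and $\ell$, uses only a two-fold aggregation, and never needs $INV$: the key is that $\mathbf{A}\oplus\sigma(\mathbf{A})$ and $\mathbf{A}'\oplus\sigma(\mathbf{A}')$ coincide because reciprocity kills the $(k,\ell)$-entry, while the rankings of $i$ and $j$ transfer directly since $\sigma$ fixes both. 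What your approach buys is economy (two matrices instead of four, no detour through inversion); what the paper's approach buys is that it lands on the canonical matrix $\mathbf{1}$, which is perhaps conceptually tidier though operationally heavier. One small wording issue: in your last paragraph you say the symmetrization should erase \emph{only} the changed entries, but it also modifies the $(k,m)$ and $(\ell,m)$ entries for $m\notin\{k,\ell\}$; what matters is not that these are preserved but that they depend only on unchanged entries of $\mathbf{A}$, which you verified correctly earlier.
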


\begin{proof}
Assume to the contrary, and let $\mathbf{A}, \mathbf{B} \in \mathcal{A}^{n \times n}$ be two pairwise comparison matrices and $1 \leq i,j,k,\ell \leq n$ be four different alternatives such that $\mathbf{A}$ and $\mathbf{B}$ are identical except for $b_{k \ell} \neq a_{k \ell}$, furthermore, $g: \mathcal{A}^{n \times n} \to \mathfrak{R}^n$ is a ranking method with $i \succeq^g_\mathbf{A} j$ but $i \prec^g_\mathbf{B} j$.

An anonymous and aggregation invariant ranking method is invertible according to Lemma~\ref{Lemma41}, hence $i \succ^g_{\mathbf{B}^-} j$.
Denote by $\sigma: N \rightarrow N$ the permutation $\sigma(i) = j$, $\sigma(j) = i$, and $\sigma(k) = k$ for all $k \neq i,j$. Anonymity leads to $i \succ^g_{\sigma(\mathbf{B})} j$, while $i \succeq^g_{\sigma(\mathbf{A})^-} j$ because of $ANO$ and $INV$.

Consider the pairwise comparison matrix $\mathbf{C} = \mathbf{A} \oplus \mathbf{B}^- \oplus \sigma(\mathbf{A})^- \oplus \sigma(\mathbf{B})$. Its elements $c_{gh}$ are as follows:
\begin{itemize}
\item
$\{ g, h \} \cap \{ i, j \} \neq \emptyset$: it can be assumed without loss of generality that $g = i$. Then $c_{ih} = \sqrt[4]{a_{ih} \cdot 1 / a_{ih} \cdot 1 / a_{jh} \cdot a_{jh}} = 1$ since $b_{ih} = a_{ih}$, $\left[ \sigma(a) \right]_{ih} = a_{jh}$, and $\left[ \sigma(b) \right]_{ih} = b_{jh} = a_{jh}$.
\item
$\{ g, h \} \cap \{ i, j \} = \emptyset$ and $|\{ g, h \} \cap \{ k, \ell \}| \leq 1$: $c_{gh} = \sqrt[4]{a_{gh} \cdot 1 / a_{gh} \cdot 1 / a_{gh} \cdot a_{gh}} = 1$ since $b_{gh} = a_{gh}$, $\left[ \sigma(a) \right]_{gh} = a_{gh}$, and $\left[ \sigma(b) \right]_{gh} = b_{gh} = a_{jh}$.
\item
$|\{ g, h \} \cap \{ k, \ell \}| = 2$: it can be assumed without loss of generality that $g = k$ and $h = \ell$. Then $c_{k \ell} = \sqrt[4]{a_{k \ell} \cdot 1 / b_{k \ell} \cdot 1 / a_{k \ell} \cdot b_{k \ell}} = 1$.
\end{itemize}
Consequently, $\mathbf{C} = \mathbf{1}$, hence anonymity implies $i \sim^g_{\mathbf{C}} j$. However, $i \succ^g_{\mathbf{C}} j$ from aggregation invariance, which is a contradiction.
\end{proof}

\begin{remark} \label{Rem45}
$ANO$ and $IIC$ do not imply $AI$.
\end{remark}

Remark~\ref{Rem45} is verified by a counterexample.

\begin{example} \label{Examp45}
Consider the ranking method based on arithmetic means: $g: \mathcal{A}^{n \times n} \to \mathfrak{R}^n$ such that $i \succeq^g_{\mathbf{A}} j$ if $\sum_{k=1}^n a_{ik} \geq \sum_{k=1}^n a_{jk}$.
It is anonymous and independent of irrelevant comparisons, but not aggregation invariant (see Example~\ref{Examp41}).
\end{example}

\begin{remark} \label{Rem46}
$AI$ and $IIC$ do not imply $ANO$.
\end{remark}

Remark~\ref{Rem46} is verified by a counterexample.

\begin{example} \label{Examp46}
Consider the ranking method based on the first column: $g: \mathcal{A}^{n \times n} \to \mathfrak{R}^n$ such that $i \succeq^g_{\mathbf{A}} j$ if $a_{i1} \geq a_{j1}$.
It is aggregation invariant and independent of irrelevant comparisons, but not anonymous.
\end{example}

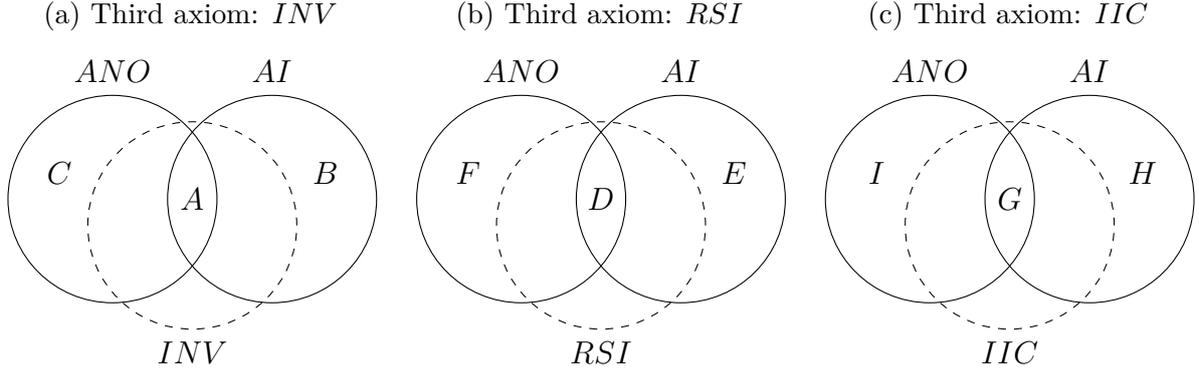
\begin{figure}[ht!]
\centering
\caption{Relations between $ANO$, $AI$, and a third axiom}
\label{Fig1}

\begin{subfigure}{.3275\textwidth}
\centering
\subcaption{Third axiom: $INV$}
\label{Fig1a}
\begin{tikzpicture}[scale=0.7, auto=center]
  \tikzset{venn circle/.style={draw,circle,minimum width=2.75cm}}
  \node [venn circle = red] (A) at (0,0) {};
  \node [venn circle = blue] (B) at (3,0) {};
  \node [venn circle = green,dashed] (C) at (1.5,-0.5) {};
  \node[above] at (0,2) {$ANO$};
  \node[above] at (3,2) {$AI$};
  \node[below] at (1.5,-2.5) {$INV$};
  \node at (barycentric cs:A=1/2,B=1/2) {$A$}; 
  \node at (4,0.5) {$B$};   
  \node at (-1,0.5) {$C$};   
\end{tikzpicture}
\end{subfigure}
\begin{subfigure}{.3275\textwidth}
\centering
\subcaption{Third axiom: $RSI$}
\label{Fig1b}
\begin{tikzpicture}[scale=0.7, auto=center]
  \tikzset{venn circle/.style={draw,circle,minimum width=2.75cm}}
  \node [venn circle = red] (A) at (0,0) {};
  \node [venn circle = blue] (B) at (3,0) {};
  \node [venn circle = green,dashed] (C) at (1.5,-0.5) {};
  \node[above] at (0,2) {$ANO$};
  \node[above] at (3,2) {$AI$};
  \node[below] at (1.5,-2.5) {$RSI$};
  \node at (barycentric cs:A=1/2,B=1/2) {$D$}; 
  \node at (4,0.5) {$E$};   
  \node at (-1,0.5) {$F$};   
\end{tikzpicture}
\end{subfigure}
\begin{subfigure}{.3275\textwidth}
\centering
\subcaption{Third axiom: $IIC$}
\label{Fig1c}
\begin{tikzpicture}[scale=0.7, auto=center]
  \tikzset{venn circle/.style={draw,circle,minimum width=2.75cm}}
  \node [venn circle = red] (A) at (0,0) {};
  \node [venn circle = blue] (B) at (3,0) {};
  \node [venn circle = green,dashed] (C) at (1.5,-0.5) {};
  \node[above] at (0,2) {$ANO$};
  \node[above] at (3,2) {$AI$};
  \node[below] at (1.5,-2.5) {$IIC$};
  \node at (barycentric cs:A=1/2,B=1/2) {$G$}; 
  \node at (4,0.5) {$H$};   
  \node at (-1,0.5) {$I$};   
\end{tikzpicture}
\end{subfigure}
\end{figure}

Main results of this section are summarized in Figure~\ref{Fig1}.
On Figure~\ref{Fig1a}, it can be seen that area $A$ is covered by the circle of axiom $INV$, in other words, $ANO$ and $AI$ imply $INV$ (Lemma~\ref{Lemma41}). Furthermore, the sets denoted by $B$ and $C$ are non-empty according to Remarks~\ref{Rem41} and \ref{Rem42}, respectively.
Analogously, since $ANO$ and $AI$ imply $RSI$ (Lemma~\ref{Lemma42}), area $D$ on Figure~\ref{Fig1b} is covered by the circle of axiom $RSI$, and the sets denoted by $E$ and $F$ are non-empty according to Remarks~\ref{Rem43} and \ref{Rem44}, respectively.
Finally, area $G$ on Figure~\ref{Fig1c} is covered by the circle of axiom $IIC$ due to Lemma~\ref{Lemma43}, and the sets denoted by $H$ and $I$ are non-empty according to Remarks~\ref{Rem45} and \ref{Rem46}, respectively.

\section{Analysis of two ranking methods} \label{Sec5}

In the following, we continue the investigation of the ranking methods presented in Section~\ref{Sec2}, with respect to Axioms~\ref{Axiom1}-\ref{Axiom6}.

\begin{proposition} \label{Prop51}
The eigenvector ranking method satisfies $ANO$ but violates $AI$, $INV$, $RSI$, and $IIC$.
\end{proposition}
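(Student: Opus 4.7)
The plan is to treat the five claims separately: verify $ANO$ directly from the spectral characterisation of $EM$, produce three explicit numerical counterexamples for $INV$, $RSI$, and $IIC$, and then deduce the failure of $AI$ for free from Lemma~\ref{Lemma41}. For $ANO$, I would observe that for any permutation $\sigma$ with associated permutation matrix $P_\sigma$ one has $\sigma(\mathbf{A}) = P_\sigma^\top \mathbf{A} P_\sigma$. Hence $\mathbf{A}\mathbf{w} = \lambda_{\max} \mathbf{w}$ if and only if $\sigma(\mathbf{A}) \cdot P_\sigma^\top \mathbf{w} = \lambda_{\max} \cdot P_\sigma^\top \mathbf{w}$, and since the Perron eigenvector is unique up to scaling and normalisation commutes with coordinate permutation, this gives $w_i^{EM}(\sigma(\mathbf{A})) = w_{\sigma(i)}^{EM}(\mathbf{A})$ and therefore $i \succeq^{EM}_{\mathbf{A}} j \iff \sigma(i) \succeq^{EM}_{\sigma(\mathbf{A})} \sigma(j)$.

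For $INV$, $RSI$, and $IIC$ I would exhibit small numerical pairwise comparison matrices together with their (numerically computed) principal eigenvectors. For $INV$, it suffices to pick a $3 \times 3$ inconsistent matrix $\mathbf{A}$ whose Perron vector induces a ranking that is not the exact reverse of the ranking induced by the Perron vector of $\mathbf{A}^\top = \mathbf{A}^-$; that such asymmetries between $\mathbf{A}$ and $\mathbf{A}^\top$ occur is a classical observation already in \citet{JohnsonBeineWang1979}. For $RSI$, the well-known ``rank reversal under matrix powers'' phenomenon provides a small matrix $\mathbf{A}$ such that the Perron-vector rankings of $\mathbf{A}$ and of $\mathbf{A}^{(\kappa)}$ differ for some positive rational $\kappa$, for instance $\kappa = 2$. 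For $IIC$, I would take $n = 4$ and two matrices $\mathbf{A},\mathbf{A}'$ agreeing in every entry except a single one $a_{k\ell}$ with $\{k,\ell\} \cap \{i,j\} = \emptyset$, and verify that the $EM$-ordering of $i$ and $j$ flips as this remote entry changes; this is possible precisely because every entry of a positive matrix influences the whole Perron eigenvector.

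Finally, the failure of $AI$ follows without further work. Lemma~\ref{Lemma41} asserts that $ANO$ and $AI$ jointly imply $INV$, so any anonymous ranking method that violates $INV$ must also violate $AI$. Since $EM$ satisfies $ANO$ but the counterexample above yields $\neg INV$, we obtain $\neg AI$; equivalently, one could invoke Lemma~\ref{Lemma42} together with the $RSI$ counterexample, or Lemma~\ref{Lemma43} together with the $IIC$ counterexample. The main obstacle is purely computational: finding small, clean pairwise comparison matrices whose principal eigenvectors actually exhibit the desired ranking reversals. All three phenomena are well documented in the literature, so the real task reduces to selecting convenient instances and verifying them by direct computation of the Perron eigenvector.
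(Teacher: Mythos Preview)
Your overall architecture coincides with the paper's: establish $ANO$, exhibit counterexamples for $INV$, $RSI$, $IIC$, and then obtain $\neg AI$ from $ANO \wedge \neg INV$ via Lemma~\ref{Lemma41}. Your direct spectral argument for $ANO$ is fine (the paper simply cites \citet[Lemma~4.2]{Csato2017b}).

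There is, however, a genuine error in your plan for $INV$. You propose to use a $3\times3$ inconsistent matrix, but this cannot work: it is a classical fact (see \citet{CrawfordWilliams1985}, and it is recalled in the paper right after Proposition~\ref{Prop52}) that $EM$ and $RGM$ induce identical weight vectors for $n\le3$. Since $RGM$ satisfies $INV$ by Proposition~\ref{Prop52}, so does $EM$ on $3\times3$ matrices; any counterexample to $INV$ for the eigenvector ranking must have $n\ge4$. The paper accordingly cites \citet{JohnsonBeineWang1979} and \citet[Lemma~4.3]{Csato2017b}, whose examples live in dimension at least four.

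A related caution applies to $RSI$. You speak of a ``small'' matrix, but the paper's counterexample (taken from \citet{GenestLapointeDrury1993}, after \citet{Kendall1955}) is $6\times6$, and the paper explicitly leaves open whether $EM$ violates $RSI$ for $n=4$ or $n=5$. So ``small'' here may already mean $n=6$; do not assume a $4\times4$ instance exists without actually producing one. Your $IIC$ plan with $n=4$ is fine and matches the paper's example, which is minimal in that axiom only bites for $n\ge4$.
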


\begin{proof}
See \citet[Lemma~4.2]{Csato2017b} for $ANO$.

Violation of $INV$ has been proved first probably in \citet{JohnsonBeineWang1979} and discussed in \citet[Lemma~4.3]{Csato2017b}. It implies the violation of $AI$ because of Lemma~\ref{Lemma41}.

For rational scale invariance, we use an example of \citet{GenestLapointeDrury1993}, adapted from \citet{Kendall1955}:
\[
\mathbf{A} = \left[
\begin{array}{K{1.5em} K{1.5em} K{1.5em} K{1.5em} K{1.5em} K{1.5em}}
    1     & 2     & 2     &  1/2  & 2     & 2 \\
     1/2  & 1     &  1/2  & 2     & 2     &  1/2 \\
     1/2  & 2     & 1     & 2     & 2     & 2 \\
    2     &  1/2  &  1/2  & 1     &  1/2  &  1/2 \\
     1/2  &  1/2  &  1/2  & 2     & 1     & 2 \\
     1/2  & 2     &  1/2  & 2     &  1/2  & 1 \\
\end{array}
\right] \quad \text{leads to} \quad
\mathbf{w}^{EM}(\mathbf{A}) = \left[
\begin{array}{c}
    0.2286 \\
    0.1430 \\
    0.2102 \\
    0.1321 \\
    0.1430 \\
    0.1430 \\
\end{array} 
\right] \text{, while }
\]
\[
\mathbf{A}^{(2)} = \left[
\begin{array}{K{1.5em} K{1.5em} K{1.5em} K{1.5em} K{1.5em} K{1.5em}}
    1     & 4     & 4     &  1/4  & 4     & 4 \\
     1/4  & 1     &  1/4  & 4     & 4     &  1/4 \\
     1/4  & 4     & 1     & 4     & 4     & 4 \\
    4     &  1/4  &  1/4  & 1     &  1/4  &  1/4 \\
     1/4  &  1/4  &  1/4  & 4     & 1     & 4 \\
     1/4  & 4     &  1/4  & 4     &  1/4  & 1 \\
\end{array}
\right] \quad \text{results in} \quad
\mathbf{w}^{EM}(\mathbf{A}^{(2)}) = \left[
\begin{array}{c}
    0.2640 \\
    0.1267 \\
    0.2261 \\
    0.1297 \\
    0.1267 \\
    0.1267 \\
\end{array} 
\right],
\]
therefore $2 \succ^{EM}_{\mathbf{A}} 4$ but $2 \prec^{EM}_{\mathbf{A}^{(2)}} 4$.

Breaking of $IIC$ can be verified by the following matrices:
\[
\mathbf{A} = \left[
\begin{array}{K{1.5em} K{1.5em} K{1.5em} K{1.5em}}
    1     & 1     & 1     & 3 \\
    1     & 1     & 2     & 1 \\
    1     & 1/2   & 1     & 1 \\
     1/3  & 1     & 1     & 1 \\
\end{array}
\right] \quad \text{leads to} \quad
\mathbf{w}^{EM}(\mathbf{A}) = \left[
\begin{array}{c}
    0.3254 \\
    0.2855 \\
    0.2034 \\
    0.1858 \\
\end{array} 
\right] \text{, while }
\]
\[
\mathbf{A}' = \left[
\begin{array}{K{1.5em} K{1.5em} K{1.5em} K{1.5em}}
    1     & 1     & 1     & 3 \\
    1     & 1     & 2     & 1 \\
    1     & 1/2   & 1     & 4 \\
     1/3  & 1     & 1/4  & 1 \\
\end{array}
\right] \quad \text{results in} \quad
\mathbf{w}^{EM}(\mathbf{A}') = \left[
\begin{array}{c}
    0.2880 \\
    0.2917 \\
    0.2855 \\
    0.1347 \\
\end{array} 
\right].
\]
Pairwise comparison matrices $\mathbf{A}$ and $\mathbf{A}'$ differ only in the comparison of alternatives $3$ and $4$, but $1 \succ^{EM}_{\mathbf{A}} 2$ and $1 \prec^{EM}_{\mathbf{A}'} 2$.
\end{proof}

On the basis of Proposition~\ref{Prop51}, eigenvector ranking method is placed somewhere in the region $C$ on Figure~\ref{Fig1a}, in the region $F$ on Figure~\ref{Fig1b}, and in the region $I$ on Figure~\ref{Fig1c}.

\begin{proposition} \label{Prop52}
The row geometric mean ranking method satisfies $ANO$, $AI$, $INV$, $RSI$, $IIC$, and $RES$.
\end{proposition}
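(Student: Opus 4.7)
The plan is to work with the observation that, since the denominator in the formula
\[
w_i^{RGM}(\mathbf{A}) = \frac{\prod_{j=1}^n a_{ij}^{1/n}}{\sum_{k=1}^n \prod_{j=1}^n a_{kj}^{1/n}}
\]
is common to all alternatives, the row geometric mean ranking is determined by the log-row-sums. Introducing the shorthand $s_i(\mathbf{A}) := \sum_{j=1}^n \log a_{ij}$, the characterization $i \succeq^{RGM}_\mathbf{A} j \iff s_i(\mathbf{A}) \geq s_j(\mathbf{A})$ reduces each of the six axioms to a short identity for the linear functional $s_i$.

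For $ANO$, a change of summation index gives $s_i(\sigma(\mathbf{A})) = \sum_k \log a_{\sigma(i)\sigma(k)} = s_{\sigma(i)}(\mathbf{A})$, from which anonymity is immediate. For $AI$, letting $\mathbf{B} = \mathbf{A}^{(1)} \oplus \cdots \oplus \mathbf{A}^{(k)}$ and using the aggregation rule, linearity of $\log$ yields $s_i(\mathbf{B}) = \frac{1}{k}\sum_{\ell=1}^k s_i(\mathbf{A}^{(\ell)})$; so if $s_i(\mathbf{A}^{(\ell)}) \geq s_j(\mathbf{A}^{(\ell)})$ for every $\ell$, then the same holds for the averages, with strict inequality as soon as at least one summand is strict.

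Once $ANO$ and $AI$ are in hand, the three axioms $INV$, $RSI$, and $IIC$ come free from Lemmas~\ref{Lemma41}, \ref{Lemma42}, and \ref{Lemma43}. They can equivalently be verified directly from the $s_i$-description: $s_i(\mathbf{A}^-) = -s_i(\mathbf{A})$ gives $INV$; $s_i(\mathbf{A}^{(\kappa)}) = \kappa\, s_i(\mathbf{A})$ with $\kappa > 0$ gives $RSI$; and if $\mathbf{A}$ and $\mathbf{A}'$ differ only in entries $(k,\ell)$ and $(\ell,k)$ with $k,\ell \notin \{i,j\}$, then rows $i$ and $j$ are untouched, so $s_i$ and $s_j$ are unchanged and $IIC$ follows.

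The only axiom not implied by Lemmas~\ref{Lemma41}--\ref{Lemma43} is $RES$, and it is also the one point where the reciprocity constraint $a_{ji} = 1/a_{ij}$ actively contributes. If $a'_{ij} > a_{ij}$ and all other independent entries coincide, then $s_i$ grows by $\log(a'_{ij}/a_{ij}) > 0$ while $s_j$ shrinks by the same amount (because $a_{ji}$ drops from $1/a_{ij}$ to $1/a'_{ij}$), so $s_i - s_j$ strictly increases by $2\log(a'_{ij}/a_{ij}) > 0$. Starting from $s_i(\mathbf{A}) \geq s_j(\mathbf{A})$ one then obtains $s_i(\mathbf{A}') > s_j(\mathbf{A}')$, i.e., $i \succ^{RGM}_{\mathbf{A}'} j$. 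I do not anticipate any genuine obstacle: every verification is a one-line computation with $s_i$, and the only subtlety is to remember the factor of two coming from reciprocity in $RES$.
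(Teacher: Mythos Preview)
Your proof is correct and follows essentially the same route as the paper: verify $ANO$ and $AI$ directly, invoke Lemmas~\ref{Lemma41}--\ref{Lemma43} for $INV$, $RSI$, $IIC$, and check $RES$ by hand. The only cosmetic difference is that you work additively with the log-row-sums $s_i(\mathbf{A})=\sum_j\log a_{ij}$ whereas the paper works multiplicatively with the row products $\prod_j a_{ij}$; since $\log$ is strictly increasing these are interchangeable, and your extra direct verifications of $INV$, $RSI$, $IIC$ are a harmless redundancy.
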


\begin{proof}
The anonymity of row geometric mean ranking is obvious.

Aggregation invariance:
Take some pairwise comparison matrices $\mathbf{A}^{(1)},\mathbf{A}^{(2)}, \dots , \mathbf{A}^{(k)} \in \mathcal{A}^{n \times n}$ such that $i \succeq^{RGM}_{\mathbf{A}^{(\ell)}} j$, that is, $\prod_{m=1}^n a_{im}^{(\ell)} \geq \prod_{m=1}^n a_{jm}^{(\ell)}$ for all $1 \leq \ell \leq k$. It implies $\sqrt[k]{\prod_{\ell=1}^k \prod_{m=1}^n a_{im}^{(\ell)}} \geq \sqrt[k]{\prod_{\ell=1}^k \prod_{m=1}^n a_{jm}^{(\ell)}}$, which is equivalent to $i \succeq^{RGM}_{\mathbf{A}^{(1)} \oplus \mathbf{A}^{(2)} \oplus \dots \oplus \mathbf{A}^{(k)}} j$.
Furthermore, if $i \succ^{RGM}_{\mathbf{A}^{(\ell)}} j$, that is, $\prod_{m=1}^n a_{im}^{(\ell)} > \prod_{m=1}^n a_{jm}^{(\ell)}$ for at least one $1 \leq \ell \leq k$, then $\sqrt[k]{\prod_{\ell=1}^k \prod_{m=1}^n a_{im}^{(\ell)}} > \sqrt[k]{\prod_{\ell=1}^k \prod_{m=1}^n a_{jm}^{(\ell)}}$, so $i \succ^{RGM}_{\mathbf{A}^{(1)} \oplus \mathbf{A}^{(2)} \oplus \dots \oplus \mathbf{A}^{(k)}} j$.

Inversion, rational scale invariance and independence of irrelevant comparisons:
They immediately follow from $ANO$ and $AI$ according to Lemmata~\ref{Lemma41}, \ref{Lemma42}, and \ref{Lemma43}, respectively.

Responsiveness:
Let $\mathbf{A},\mathbf{A}' \in \mathcal{A}^{n \times n}$ be two pairwise comparison matrices and $1 \leq i,j \leq n$ be two different alternatives such that $\mathbf{A}$ and $\mathbf{A}'$ are identical but $a'_{ij} > a_{ij}$.
Assume that $i \succeq^{RGM}_{\mathbf{A}} j$, namely, $\prod_{k=1}^n a_{ik} \geq \prod_{k=1}^n a_{jk}$. Then $\prod_{k=1}^n a_{ik}' > \prod_{k=1}^n a_{ik} \geq \prod_{k=1}^n a_{jk} > \prod_{k=1}^n a_{jk}'$, therefore $i \succ^{RGM}_{\mathbf{A}'} j$.
\end{proof}

On the basis of Proposition~\ref{Prop52}, row geometric mean ranking method is placed somewhere in the region $A$ on Figure~\ref{Fig1a}, in the region $D$ on Figure~\ref{Fig1b}, and in the region $G$ on Figure~\ref{Fig1c}.

It is known that $EM$ and $RGM$ are equivalent if $n \leq 3$ \citep{CrawfordWilliams1985}.
Hence the counterexample for $IIC$ in Proposition~\ref{Prop51} is minimal with respect to the number of alternatives.
However, it remains to be seen whether the eigenvector ranking method satisfies $RSI$ for $n=4$ and $n=5$.

The eigenvector ranking method is not analysed with respect to responsiveness here.

\section{A characterization of the row geometric mean ranking method} \label{Sec6}

It has been presented in Section~\ref{Sec5} that the ranking induced by row geometric mean is compatible with the six properties introduced in Section~\ref{Sec3}. Lemmata~\ref{Lemma41}, \ref{Lemma42}, and \ref{Lemma43} have also revealed that $ANO$ and $AI$ are powerful axioms. According to our central result, they, together with $RES$, characterize this specific ordering. 

\begin{theorem} \label{Theo61}
The row geometric mean ranking method is the unique ranking method satisfying anonymity, aggregation invariance and responsiveness.
\end{theorem}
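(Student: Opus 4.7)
By Proposition~\ref{Prop52}, $\succeq^{RGM}$ satisfies the three axioms, so only uniqueness needs proof. I assume $g$ satisfies $ANO$, $AI$ and $RES$, and aim to show $g = \succeq^{RGM}$. By Lemmata~\ref{Lemma41}--\ref{Lemma43}, $g$ automatically also satisfies $INV$, $RSI$ and $IIC$. Writing $\rho_p(\mathbf{A}) := \prod_{k=1}^n a_{pk}$, so that $i \succeq^{RGM}_{\mathbf{A}} j \iff \rho_i(\mathbf{A}) \ge \rho_j(\mathbf{A})$, I split the goal into (b) $\rho_i(\mathbf{A}) = \rho_j(\mathbf{A}) \Rightarrow i \sim^g_{\mathbf{A}} j$ and (a) $\rho_i(\mathbf{A}) > \rho_j(\mathbf{A}) \Rightarrow i \succ^g_{\mathbf{A}} j$. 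Claim~(a) follows from~(b) by a single $RES$ step: the matrix $\mathbf{A}^*$ obtained from $\mathbf{A}$ by lowering $a_{ij}$ to $a_{ij}\sqrt{\rho_j(\mathbf{A})/\rho_i(\mathbf{A})}$ satisfies $\rho_i(\mathbf{A}^*) = \rho_j(\mathbf{A}^*)$, hence $i \sim^g_{\mathbf{A}^*} j$ by~(b), and restoring $a_{ij}$ to its original larger value yields $i \succ^g_{\mathbf{A}} j$ by $RES$.

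For claim~(b), I would first reduce $\mathbf{A}$ to a canonical structured matrix $\mathbf{C}'$ while preserving the $g$-ranking of $i$ vs $j$ and both row products $\rho_i$ and $\rho_j$. Using $IIC$, I reset every entry $a_{k\ell}$ with $\{k,\ell\} \cap \{i,j\} = \emptyset$ to $1$. Then, aggregating $\mathbf{A}$ with all its images $\tau(\mathbf{A})$ under permutations $\tau$ that fix $i$ and $j$, $ANO$ guarantees that each $\tau(\mathbf{A})$ carries the same pairwise $(i,j)$-ranking as $\mathbf{A}$, and $AI$ passes this ranking to the aggregate $\mathbf{C}'$, whose off-diagonal row entries are now constant: $c'_{ik} = \alpha := (\prod_{m \neq i,j} a_{im})^{1/(n-2)}$ and $c'_{jk} = \beta := (\prod_{m \neq i,j} a_{jm})^{1/(n-2)}$ for $k \neq i, j$, with $c'_{ij} = a_{ij}$ and $c'_{k\ell} = 1$ on the remote block. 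The hypothesis $\rho_i = \rho_j$ then pins down the parametric relation $\beta = \alpha\, a_{ij}^{2/(n-2)}$ on $\mathbf{C}'$.

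The concluding step is to show $i \sim^g_{\mathbf{C}'} j$. A direct computation shows that the aggregate $\mathbf{E} := \mathbf{C}' \oplus \sigma_{ij}(\mathbf{C}')$ has identical rows $i$ and $j$ (with $e_{ij} = 1$ and $e_{ik} = e_{jk} = \sqrt{\alpha\beta}$), so $\mathbf{E}$ is $\sigma_{ij}$-invariant and $ANO$ gives $i \sim^g_{\mathbf{E}} j$. The main obstacle is transferring this indifference back to $\mathbf{C}'$: applying $AI$ directly to the decomposition $\mathbf{E} = \mathbf{C}' \oplus \sigma_{ij}(\mathbf{C}')$ is blocked, because a hypothetical strict preference $i \succ^g_{\mathbf{C}'} j$ would by $ANO$ be reversed in $\sigma_{ij}(\mathbf{C}')$, so the common-direction premise of $AI$ cannot be activated. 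I would bridge this by perturbing the $(i,j)$-entry of $\mathbf{E}$ via $RES$, rationally rescaling with $RSI$, and reassembling through $AI$ with further copies of $\mathbf{C}'$ and $\sigma_{ij}(\mathbf{C}')$ at carefully chosen exponents, so that the hypothetical strict preference on $\mathbf{C}'$ collides with the indifference on a swap-symmetric aggregate, yielding the contradiction that closes Case~(b) and hence the theorem.
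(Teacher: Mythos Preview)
Your reductions are sound and match the paper closely: citing Proposition~\ref{Prop52}, deriving $INV$, $RSI$, $IIC$ from Lemmata~\ref{Lemma41}--\ref{Lemma43}, handling the strict case via a single $RES$ step, wiping the remote block with $IIC$, and symmetrising over permutations that fix $i,j$ to land on the two-parameter matrix $\mathbf{C}'$ --- all of this is essentially the paper's Step~I and its final paragraph. The gap is the closing step of Case~(b): you need $i \sim^g_{\mathbf{C}'} j$, and your sketch does not deliver it.

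The obstacle you yourself flag is real and your proposed ``bridge'' does not overcome it. Your $\mathbf{E} = \mathbf{C}' \oplus \sigma_{ij}(\mathbf{C}')$ has \emph{identical} rows $i$ and $j$, so $\sigma_{ij}(\mathbf{E}) = \mathbf{E}$ and the matrix carries no asymmetry between $i$ and $j$ at all; it cannot be leveraged to constrain $\mathbf{C}'$. Aggregating $\mathbf{C}'$ (strict) with copies of $\mathbf{E}$ (tie) via $AI$ only tells you the aggregate is strict --- no contradiction. Aggregating $\sigma_{ij}(\mathbf{C}')$ (reversed strict) with anything requires the common-direction premise, which is exactly what you lack. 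Perturbing $\mathbf{E}$ by $RES$ and rescaling by $RSI$ does not help either, since every matrix you can reach that way still has the $\sigma_{ij}$-symmetric block structure and yields only $i \sim^g j$ on itself. The phrase ``at carefully chosen exponents'' is a promissory note, not an argument.

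The paper resolves this by aggregating $\mathbf{C}'$ not with $\sigma_{ij}(\mathbf{C}')$ but with a hand-built $\mathbf{D}$ (on which $i \sim^g_{\mathbf{D}} j$ by $ANO$) so that $\mathbf{E} := \mathbf{C}' \oplus \mathbf{D}$ has all row geometric means equal to~$1$. This forces the crucial identity $\sigma_{ij}(\mathbf{E}) = \mathbf{E}^{-}$, which is what makes the argument go: first $ANO$/$INV$ rule out $i$ and $j$ lying on the same side of $\{3,\dots,n\}$; then, assuming $i \succ^g_{\mathbf{E}} 3 \succ^g_{\mathbf{E}} j$, the permutations $\sigma_{j,m}$ (swapping $j$ with $m \in \{3,\dots,n\}$) each give $i \succ^g_{\sigma_{j,m}(\mathbf{E})} j$ by $ANO$, and a direct computation shows $\sigma_{ij}(\mathbf{E})^{(1/(n-2))} = \sigma_{j,3}(\mathbf{E}) \oplus \cdots \oplus \sigma_{j,n}(\mathbf{E})$. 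Now $AI$ and $RSI$ force $i \succ^g_{\sigma_{ij}(\mathbf{E})} j$, contradicting $i \prec^g_{\mathbf{E}^{-}} j$ from $INV$. That identity --- $\sigma_{ij}(\mathbf{E}) = \mathbf{E}^{-}$ together with the $(n-2)$-fold product decomposition --- is the missing idea, and nothing in your sketch approximates it.
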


\begin{proof}
Row geometric mean ranking method satisfies $ANO$, $AI$ and $RES$ due to Proposition~\ref{Prop52}.

Take an arbitrary pairwise comparison matrix $\mathbf{A} \in \mathcal{A}^{n \times n}$ and an anonymous, aggregation invariant and responsive ranking method $g: \mathcal{A}^{n \times n} \to \mathfrak{R}^n$.
It can be assumed without loss of generality that $1 \succeq^{RGM}_{\mathbf{A}} 2$ because row geometric mean ranking satisfies $ANO$. It is enough to show that $1 \succeq^{g}_{\mathbf{A}} 2$ and $1 \succ^{g}_{\mathbf{A}} 2$ if $1 \succ^{RGM}_{\mathbf{A}} 2$.

Assume that $1 \sim^{RGM}_{\mathbf{A}} 2$, namely, $\prod_{j=1}^n a_{1j} = \prod_{j=1}^n a_{2j}$.
If $n=2$, then $\mathbf{A} = \mathbf{1}$, therefore anonymity provides that $1 \sim^g_{\mathbf{A}} 2$.

For $n \geq 3$, the proof is based on the following idea.
As a first step, some transformations will be made in order to get a pairwise comparison matrix where the ranking according to $g$ can be deduced from anonymity in the second step, while in the third part it will be proved that the pairwise ranking of alternatives $1$ and $2$ is not influenced by the previous transformations. Only the anonymity and aggregation invariance of $g$ will be used in this process.


\begin{enumerate}[label=\emph{\Roman*}.]
\item
Consider the pairwise comparison matrix $\mathbf{B} \in \mathcal{A}^{n \times n}$ such that $b_{1j} = a_{1j}$ and $b_{2j} = a_{2j}$ for all $1 \leq j \leq n$ but $b_{k \ell} = 1$ if $3 \leq k,\ell \leq n$. Note that $\mathbf{B} = \mathbf{A}$ if $n=3$.

Let $\sigma_{m}: N \to N$ be the permutation $\sigma_m(1) = 1$, $\sigma_m(2) = 2$, and $\sigma_m(k) = 3 + \left[ (m+k-3) \bmod (n-2) \right]$ for all $3 \leq k \leq n$ where $0 \leq m \leq n-3$.
For instance, $\sigma_1(3) = 4$, $\sigma_1(4) = 5$, and $\sigma_1(n) = 3$ if $n \geq 5$. Let $\sigma_m(\mathbf{B})$ be the pairwise comparison matrix obtained from $\mathbf{B}$ by permutation $\sigma_m$. It is clear that $\sigma_0(\mathbf{B}) = \mathbf{B}$.

Consider the pairwise comparison matrix $\mathbf{C} = \sigma_0(\mathbf{B}) \oplus \sigma_1(\mathbf{B}) \oplus \sigma_2(\mathbf{B}) \oplus \dots \oplus \sigma_{n-3}(\mathbf{B}) \in \mathcal{A}^{n \times n}$.
Its elements in the upper triangle -- which uniquely determine a pairwise comparison matrix due to its reciprocity -- are $c_{12} = a_{12}$, $c_{1k} = \sqrt[n-2]{\prod_{\ell=3}^n a_{1 \ell}}$ for all $3 \leq k \leq n$, $c_{2k} = \sqrt[n-2]{\prod_{\ell=3}^n a_{2 \ell}}$ for all $3 \leq k \leq n$, and $c_{k \ell} = 1$ for all $3 \leq k,\ell \leq n$.
Note that $\mathbf{C} = \mathbf{B}$ if $n=3$.

Define the pairwise comparison matrix $\mathbf{D} \in \mathcal{A}^{n \times n}$ such that $d_{12} = 1$, $d_{1k} = 1 / \left( \sqrt[n-2]{\prod_{j=1}^n a_{1j}} \right)$ for all $3 \leq k \leq n$, $d_{2k} = 1 / \left( \sqrt[n-2]{\prod_{j=1}^n a_{2j}} \right)$ for all $3 \leq k \leq n$, and $d_{k \ell} = 1$ for all $3 \leq k,\ell \leq n$.

Consider the pairwise comparison matrix $\mathbf{E} = \mathbf{C} \oplus \mathbf{D}$. Its elements are $e_{12} = \sqrt{a_{12}} = \alpha$, $e_{1k} = \sqrt[n-2]{1 / \alpha}$ for all $3 \leq k \leq n$, $e_{2k} = \sqrt[n-2]{\alpha}$ for all $3 \leq k \leq n$, and $e_{k \ell} = 1$ for all $3 \leq k,\ell \leq n$ as the geometric means of its row elements are ones.

\item
It is shown that $1 \sim^g_{\mathbf{E}} 2 \sim^g_{\mathbf{E}} \dots \sim^g_{\mathbf{E}} n$.
Anonymity implies $3 \sim^g_{\mathbf{E}} 4 \sim^g_{\mathbf{E}} \dots \sim^g_{\mathbf{E}} n$.

Let $\sigma_{1,2}: N \to N$ be the permutation $\sigma_{1,2}(1) = 2$, $\sigma_{1,2}(2) = 1$, and $\sigma_{1,2}(k) = k$ for all $3 \leq k \leq n$. Let $\sigma_{1,2}(\mathbf{E})$ be the pairwise comparison matrix obtained from $\mathbf{E}$ by the permutation $\sigma_{1,2}$.
Note that $\sigma_{1,2}(\mathbf{E}) = \mathbf{E}^-$. Ranking method $g$ is anonymous and aggregation invariant, so it satisfies inversion according to Lemma~\ref{Lemma41}.
If $1 \succ^g_{\mathbf{E}} 3$ and $2 \succ^g_{\mathbf{E}} 3$, then $ANO$ implies $2 \succ^g_{\sigma_{1,2}(\mathbf{E})} 3$, but $INV$ results in $2 \prec^g_{\mathbf{E}^-} 3$, a contradiction.
If $1 \prec^g_{\mathbf{E}} 3$ and $2 \prec^g_{\mathbf{E}} 3$, then $ANO$ implies $2 \prec^g_{\sigma_{1,2}(\mathbf{E})} 3$, but $INV$ results in $2 \succ^g_{\mathbf{E}^-} 3$, a contradiction.

Due to the anonymity of the ranking method $g$, it can be supposed without loss of generality that $1 \succ^g_{\mathbf{E}} \left( 3 \sim^g_{\mathbf{E}} 4 \sim^g_{\mathbf{E}} \dots \sim^g_{\mathbf{E}} n \right) \succ^g_{\mathbf{E}} 2$.
Let $\sigma_{2,m}: N \to N$ be the permutation $\sigma_{2,m}(1) = 1$, $\sigma_{2,m}(2) = m$, $\sigma_{2,m}(m) = 2$, and $\sigma_{2,m}(\ell) = \ell$ for all $\ell \neq m$, $3 \leq \ell \leq n$ where $3 \leq m \leq n$. Let $\sigma_{2,m}(\mathbf{E})$ be the pairwise comparison matrix obtained from $\mathbf{E}$ by the permutation $\sigma_{2,m}$.

It can be checked that $\left[ \sigma_{1,2}(e) \right]_{ij} = \left[ \sigma_{2,3}(e) \right]_{ij} \left[ \sigma_{2,4}(e) \right]_{ij} \cdots \left[ \sigma_{2,n}(e) \right]_{ij}$ for all $1 \leq i,j \leq n$, in other words, $\sigma_{1,2}(\mathbf{E})^{\left( 1/(n-2) \right)} = \sigma_{2,3}(\mathbf{E}) \oplus \sigma_{2,4}(\mathbf{E}) \oplus \dots \oplus \sigma_{2,n}(\mathbf{E})$. 
Anonymity implies $1 \succ^g_{\sigma_{2,m}(\mathbf{E})} 2$ for all $3 \leq m \leq n$, therefore aggregation invariance leads to $1 \succ^g_{\sigma_{2,3}(\mathbf{E}) \oplus \sigma_{2,4}(\mathbf{E}) \oplus \dots \oplus \sigma_{2,n}(\mathbf{E})} 2$, and rational scale invariance (an immediate consequence of $ANO$ and $AI$ according to Lemma~\ref{Lemma42}) results in $1 \succ^g_{\sigma_{1,2}(\mathbf{E})} 2$.
But $INV$ and $1 \succ^g_{\mathbf{E}} 2$ also leads to $1 \prec^g_{\mathbf{E}^-} 2$, which is a contradiction.

To summarize, we have derived $1 \sim^g_{\mathbf{E}} 2 \sim^g_{\mathbf{E}} 3 \sim^g_{\mathbf{E}} \dots \sim^g_{\mathbf{E}} n$.

\item
Anonymity implies $1 \sim^g_{\mathbf{D}} 2$ since $d_{12} = 1$ and $d_{1k} = d_{2k}$ for all $3 \leq k \leq n$, which means $1 \sim^g_{\mathbf{C}} 2$ because $1 \sim^g_{\mathbf{E}} 2$, $\mathbf{E} = \mathbf{C} \oplus \mathbf{D}$ and $g$ is aggregation invariant. Furthermore, permutation $\sigma_m$, used in the definition of pairwise comparison matrix $\mathbf{B}$, does not affect alternatives $1$ and $2$, therefore $1 \sim^g_{\mathbf{B}} 2$. $ANO$ and $AI$ also imply independence of irrelevant comparisons (see Lemma~\ref{Lemma43}), hence $1 \sim^g_{\mathbf{A}} 2$.
\end{enumerate}

We have verified up to this point that $1 \sim^{RGM}_{\mathbf{A}} 2$ implies $1 \sim^g_{\mathbf{A}} 2$.
If $1 \succ^{RGM}_{\mathbf{A}} 2$, namely, $\prod_{j=1}^n a_{1j} > \prod_{j=1}^n a_{2j}$, then consider the pairwise comparison matrix $\mathbf{A}' \in \mathcal{A}^{n \times n}$ where $a_{k \ell}' = a_{k \ell}$ for all $1 \leq k,\ell \leq n$ except for $a_{12}' = a_{12} \sqrt{\prod_{j=1}^n a_{2j} / \prod_{j=1}^n a_{1j}}$ as well as $a_{21}' = a_{21} \sqrt{\prod_{j=1}^n a_{1j} / \prod_{j=1}^n a_{2j}}$ in order to preserve reciprocity.
Therefore
\[
\prod_{j=1}^n a_{1j}' = \left( \prod_{j=1}^n a_{1j} \right) \sqrt{\frac{\prod_{j=1}^n a_{2j}}{\prod_{j=1}^n a_{1j}}} = \left( \prod_{j=1}^n a_{2j} \right) \sqrt{\frac{\prod_{j=1}^n a_{1j}}{\prod_{j=1}^n a_{2j}}} = \prod_{j=1}^n a_{2j}',
\]
and it has been proved above that $1 \sim^g_{\mathbf{A}'} 2$. So $1 \succ^g_{\mathbf{A}} 2$ due to the responsiveness of $g$.
\end{proof}

\begin{example} \label{Examp61}
As an illustration of the proof of Theorem~\ref{Theo61}, it is worth to consider the pairwise comparison matrices used in the derivations, which are as follows for $n=4$:
\[
\mathbf{A} = \left[
\begin{array}{cccc}
    1			& a_{12}	& a_{13}	& a_{14} \\
    1/a_{12}	& 1			& a_{23}	& a_{24} \\
    1/a_{13}	& 1/a_{23}	& 1     	& a_{34} \\
    1/a_{14}	& 1/a_{24}	& 1/a_{34}	& 1 \\
\end{array}
\right], \quad
\mathbf{B} = \sigma_0(\mathbf{B}) = \left[
\begin{array}{cccc}
    1			& a_{12}	& a_{13}	& a_{14} \\
    1/a_{12}	& 1			& a_{23}	& a_{24} \\
    1/a_{13}	& 1/a_{23}	& 1     	& 1 \\
    1/a_{14}	& 1/a_{24}	& 1			& 1 \\
\end{array}
\right], \quad
\]
\[
\sigma_1(\mathbf{B}) = \left[
\begin{array}{cccc}
    1			& a_{12}	& a_{14}	& a_{13} \\
    1/a_{12}	& 1			& a_{24}	& a_{23} \\
    1/a_{14}	& 1/a_{24}	& 1     	& 1 \\
    1/a_{13}	& 1/a_{23}	& 1			& 1 \\
\end{array}
\right],
\]
\[
\mathbf{C} = \sigma_0(\mathbf{B}) \oplus \sigma_1(\mathbf{B}) = \left[
\begin{array}{cccc}
    1						& a_{12}					& \sqrt{a_{13} a_{14}}	& \sqrt{a_{13} a_{14}} \\
    1/a_{12}				& 1							& \sqrt{a_{23} a_{24}}	& \sqrt{a_{23} a_{24}} \\
    1/ \sqrt{a_{13} a_{14}}	& 1/ \sqrt{a_{23} a_{24}}	& 1     				& 1 \\
    1/ \sqrt{a_{13} a_{14}}	& 1/ \sqrt{a_{23} a_{24}}	& 1						& 1 \\
\end{array}
\right], \quad
\]
\[
\mathbf{D} = \left[
\begin{array}{cccc}
    1	& 1	& 1 / \sqrt{a_{12} a_{13} a_{14}}	& 1 / \sqrt{a_{12} a_{13} a_{14}} \\
    1	& 1	& \sqrt{a_{12}} / \sqrt{a_{23} a_{24}}	& \sqrt{a_{12}} / \sqrt{a_{23} a_{24}} \\
    \sqrt{a_{12} a_{13} a_{14}}	& \sqrt{a_{23} a_{24}} / \sqrt{a_{12}}	& 1		& 1 \\
    \sqrt{a_{12} a_{13} a_{14}}	& \sqrt{a_{23} a_{24}} / \sqrt{a_{12}}	& 1		& 1 \\
\end{array}
\right].
\]
Recall that $\alpha = \sqrt{a_{12}}$, hence
\[
\mathbf{E} = \mathbf{C} \oplus \mathbf{D} \left[
\begin{array}{cccc}
    1					& \alpha			& \frac{1}{\sqrt{\alpha}}	& \frac{1}{\sqrt{\alpha}} \\
    \frac{1}{\alpha}	& 1					& \sqrt{\alpha}				& \sqrt{\alpha} \\
    \sqrt{\alpha}		& \frac{1}{\sqrt{\alpha}}	& 1     			& 1 \\
    \sqrt{\alpha}		& \frac{1}{\sqrt{\alpha}}	& 1					& 1 \\
\end{array}
\right], \quad
\sigma_{1,2}(\mathbf{E}) = \mathbf{E}^- = \left[
\begin{array}{cccc}
    1				& \frac{1}{\alpha}		& \sqrt{\alpha}				& \sqrt{\alpha} \\
    \alpha			& 1						& \frac{1}{\sqrt{\alpha}}	& \frac{1}{\sqrt{\alpha}} \\
    \frac{1}{\sqrt{\alpha}}	& \sqrt{\alpha}	& 1     			& 1 \\
    \frac{1}{\sqrt{\alpha}}	& \sqrt{\alpha}	& 1					& 1 \\
\end{array}
\right], \quad
\]
\[
\sigma_{2,3}(\mathbf{E}) = \left[
\begin{array}{cccc}
    1					& \frac{1}{\sqrt{\alpha}}	& \alpha	& \frac{1}{\sqrt{\alpha}} \\
    \sqrt{\alpha}		& 1							& \frac{1}{\sqrt{\alpha}}	& 1 \\
    \frac{1}{\alpha}	& \sqrt{\alpha}				& 1     	& \sqrt{\alpha} \\
    \sqrt{\alpha}		& 1							& \frac{1}{\sqrt{\alpha}}	& 1 \\
\end{array}
\right], \quad
\sigma_{2,4}(\mathbf{E}) = \mathbf{E}^- = \left[
\begin{array}{cccc}
    1					& \frac{1}{\sqrt{\alpha}}	& \frac{1}{\sqrt{\alpha}}	& \alpha \\
    \sqrt{\alpha}		& 1							& 1	& \frac{1}{\sqrt{\alpha}} \\
    \sqrt{\alpha}		& 1							& 1 & \frac{1}{\sqrt{\alpha}} \\
    \frac{1}{\alpha}	& \sqrt{\alpha}				& \sqrt{\alpha}				& 1 \\
\end{array}
\right].
\]
\end{example}

All three properties used in the proof of Theorem~\ref{Theo61} are necessary according to the following result.

\begin{proposition} \label{Prop61}
$ANO$, $AI$, and $RES$ are logically independent axioms.
\end{proposition}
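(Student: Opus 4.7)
The plan is to verify logical independence in the standard way, by exhibiting three ranking methods, each satisfying exactly two of $ANO$, $AI$, and $RES$. Two of the examples can be built on objects already present in the paper; the third requires a new construction and is where I expect the only real difficulty.

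For $ANO$ and $AI$ without $RES$, I would use the trivial ranking that declares all alternatives to be tied on every pairwise comparison matrix, that is, $i \sim^g_\mathbf{A} j$ for every $\mathbf{A}$ and every $i,j$. Anonymity is automatic, and aggregation invariance holds because the strict part of its hypothesis is never met. Responsiveness fails at once, since strictly increasing $a_{ij}$ cannot promote $i \sim^g_\mathbf{A} j$ to $i \succ^g_{\mathbf{A}'} j$.

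For $ANO$ and $RES$ without $AI$, I would reuse the arithmetic-mean ranking of Example~\ref{Examp41}. Anonymity is clear and the failure of $AI$ has already been established there. For responsiveness, if $a_{ij}' > a_{ij}$ then reciprocity gives $a_{ji}' < a_{ji}$, so $\sum_k a_{ik}'$ strictly increases while $\sum_k a_{jk}'$ strictly decreases, which upgrades any weak preference of $i$ over $j$ to a strict one.

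The case $AI$ and $RES$ without $ANO$ is the delicate one. The natural first guess, the first-column ranking of Examples~\ref{Examp42}, \ref{Examp44}, and \ref{Examp46}, fails because altering any $a_{k\ell}$ with $k, \ell \neq 1$ does not touch $a_{i1}$ or $a_{j1}$ and hence cannot break ties; responsiveness is thereby violated. I would instead use a column-weighted logarithmic score: fix positive constants $\alpha_1, \dots, \alpha_n$ that are not all equal, and declare $i \succeq^g_\mathbf{A} j$ iff $\sum_k \alpha_k \log a_{ik} \geq \sum_k \alpha_k \log a_{jk}$. Aggregation invariance follows by the same linearity argument used for $RGM$ in Proposition~\ref{Prop52}, since the score is additive in $\log a_{ik}$ and aggregation is the geometric mean. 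Responsiveness follows because increasing $a_{ij}$ adds $\alpha_j \log(a_{ij}'/a_{ij}) > 0$ to the score of $i$ and $\alpha_i \log(a_{ji}'/a_{ji}) < 0$ to that of $j$. Anonymity fails under any permutation that sends a column of small weight onto one of large weight; a concrete counterexample is easy to write down already for $n = 3$ with, for example, $\alpha = (1, 2, 3)$ and the cyclic permutation $1 \to 2 \to 3 \to 1$.
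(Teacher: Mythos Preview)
Your proposal is correct. The first two examples coincide with the paper's: the flat ranking for $ANO$ and $AI$ without $RES$, and the row arithmetic mean ranking for $ANO$ and $RES$ without $AI$.

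For the third case, $AI$ and $RES$ without $ANO$, you take a genuinely different route. The paper uses the purely index-based ranking $i \succ^g_{\mathbf{A}} j \iff i < j$, which ignores the matrix entirely. This is simpler: $AI$ and $RES$ hold trivially because the ranking is a fixed strict total order independent of $\mathbf{A}$, so there are never ties between distinct alternatives and the hypotheses of both axioms are automatically satisfied. Your column-weighted logarithmic score also works, and your verification of $AI$ and $RES$ is sound; note only that the failure of $ANO$ requires a specific matrix (not just a specific permutation), which you leave implicit but which is indeed straightforward to produce. The trade-off: the paper's example is cheaper to verify, whereas yours has the virtue of being a bona fide matrix-dependent method and illustrates that even close relatives of $RGM$ can fail anonymity once the column symmetry is broken. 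Your observation that the first-column ranking from Examples~\ref{Examp42}, \ref{Examp44}, \ref{Examp46} does \emph{not} satisfy $RES$ is also a useful remark not made explicit in the paper.
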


\begin{proof}
It is shown that there exist ranking methods, which satisfy exactly two properties from this set of three, but differ from the row geometric mean ranking method (and therefore they are guaranteed to violate the third axiom):
\begin{enumerate}[label=\fbox{\arabic*}]
\item
$ANO$ and $AI$: flat ranking method, $g: \mathcal{A}^{n \times n} \to \mathfrak{R}^n$ such that $i \sim^g_{\mathbf{A}} j$ for all alternatives $i,j \in N$ and any pairwise comparison matrix $\mathbf{A} \in \mathcal{A}^{n \times n}$;
\item
$ANO$ and $RES$: row arithmetic mean ranking method, $g: \mathcal{A}^{n \times n} \to \mathfrak{R}^n$ such that $i \succeq^g_{\mathbf{A}} j$ for all alternatives $i,j \in N$ and for any pairwise comparison matrix $\mathbf{A} \in \mathcal{A}^{n \times n}$  if $\sum_{k=1}^n a_{ik} \geq \sum_{k=1}^n a_{jk}$;
\item
$AI$ and $RES$: a ranking method based on indices, $g: \mathcal{A}^{n \times n} \to \mathfrak{R}^n$ such that $i \succ^g_{\mathbf{A}} j$ for all alternatives $i,j \in N$ and for any pairwise comparison matrix $\mathbf{A} \in \mathcal{A}^{n \times n}$ if $i < j$.
\end{enumerate}
\end{proof}

\begin{figure}[ht!]
\centering
\caption{Relations between $ANO$, $AI$, and $RES$}
\label{Fig2}
\begin{tikzpicture}[scale=2, >=triangle 45]
  \draw (-1,0) -- (0,1) -- (1,0) -- (0,-1) -- (-1,0);
  \draw (0,0) -- (1,1) -- (2,0) -- (1,-1) -- (0,0);
  \draw (0.5-0.707106781186548,-0.5) -- (0.5+0.707106781186548,-0.5) -- (0.5+0.707106781186548,-0.5-2*0.707106781186548) -- (0.5-0.707106781186548,-0.5-2*0.707106781186548) -- (0.5-0.707106781186548,-0.5);
  \node at (-0.75,0.75) {$ANO$};
  \node at (1.75,0.75) {$AI$};
  \node[below] at (0.5,-2) {$RES$};
  \node at (0.5,-0.5) [circle,fill,inner sep=2pt]{};
  \node at (0.5,0) {$J$};
  \node at (0,-0.7) {$K$};
  \node at (1,-0.7) {$L$};
  \node (n) at (2.5,0.5) {$RGM$};
  \draw [->] (n) -- (0.55,-0.485);
\end{tikzpicture}
\end{figure}
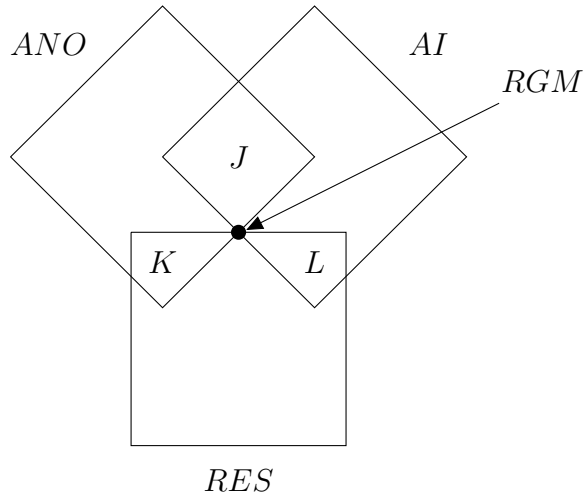

Figure~\ref{Fig2} summarizes our findings from Theorem~\ref{Theo61} and Proposition~\ref{Prop61}. First, the three axioms meet at a unique point, denoted by the dot, since there is a unique ranking method, the one induced by row geometric mean, that satisfies all of them. Second, the intersection of any two properties, denoted by the labels $J$, $K$, and $L$, is non-empty.

It is clear that $ANO$, $AI$ and \emph{negative responsiveness} (requiring the implication $i \preceq^g_{\mathbf{A}} j \Rightarrow i \prec^g_{\mathbf{A}'} j$ under the conditions of Axiom~\ref{Axiom6}) are also independent and uniquely determine the ordering opposite to the row geometric mean ranking. Naturally, this observation has only a technical sense.

\section{Discussion} \label{Sec7}

We have examined the problem of extracting a ranking of alternatives from a reciprocal pairwise comparison ratio matrix. A characterization of the ordering induced by row geometric means has been presented, which shows that a unique ranking can be derived by requiring anonymity, aggregation invariance, and responsiveness. It is a solid argument in favour of this particular method.

We do not suggest to accept the three axioms immediately. However, $ANO$ and $RES$ seem to be difficult to debate, whereas $AI$ follows from a well-known result of synthesizing ratio judgements \citep{AczelSaaty1983}. Perhaps it is not only a coincidence that row geometric mean has a number of other favourable properties (see, e.g. \citet{BarzilaiCookGolany1987, Barzilai1997, Dijkstra2013, Csato2015a, LundySirajGreco2017, Csato2018c}).

There are some obvious topics for further research.
It is worth to consider whether certain axioms (especially aggregation invariance) can be substituted in our main theorem.
Responsiveness of the eigenvector ranking method has been not discussed here.
Several other methods can be analysed with respect to these axioms.
Finally, an extension to the incomplete case, when some elements of the pairwise comparison matrix may be missing, deserves a thorough investigation.
Row geometric mean method has been defined on this more general domain by \citet{BozokiFulopRonyai2010} on the basis of optimization problem \eqref{Eq_LLSM}, without affecting at least one desirable property of the procedure \citep{BozokiTsyganok2017}.

\section*{Acknowledgements}
\addcontentsline{toc}{section}{Acknowledgements}
\noindent
We are grateful to \emph{S\'andor Boz\'oki} and \emph{Matteo Brunelli} for useful advice.
We thank two anonymous referees for beneficial remarks and suggestions. \\
The research was supported by OTKA grant K 111797 and by the MTA Premium Post Doctorate Research Program.


\end{document}